\newtheorem*{corollary*}{Corollary}
\newtheorem{theorem}{Theorem}[section]
\newtheorem*{theorem*}{Theorem}
\newtheorem{lemma}[theorem]{Lemma}
\newtheorem{proposition}[theorem]{Proposition}
\newtheorem*{claim*}{Claim}
\theoremstyle{definition}
\theoremstyle{remark}
\numberwithin{equation}{theorem}
\renewcommand*\env@matrix[1][\
arraystretch]{%
  \edef\arraystretch{#1}%
  \hskip -\arraycolsep
  \let\@ifnextchar\new@ifnextchar
  \array{*\c@MaxMatrixCols c}}
\begin{document}

\title{On the finiteness of the Gorenstein dimension for Artin algebras}
\date{\today}

\subjclass[2010]{Primary 16G10, 16E10}

\keywords{Gorenstein projective dimension, Gorenstein injective dimension, Gorenstein algebras}

\author{Ren\'{e} Marczinzik}
\address{Institute of algebra and number theory, University of Stuttgart, Pfaffenwaldring 57, 70569 Stuttgart, Germany}
\email{marczire@mathematik.uni-stuttgart.de}

\begin{abstract}
In \cite{SSZ}, the authors proved that an Artin algebra $A$ with infinite global dimension has an indecomposable module with infinite projective and infinite injective dimension, giving a new characterisation of algebras with finite global dimension. We prove in this article that an Artin algebra $A$ that is not Gorenstein has an indecomposable $A$-module with infinite Gorenstein projective dimension and infinite Gorenstein injective dimension, which gives a new characterisation of algebras with finite Gorenstein dimension. We show that this gives a proper generalisation of the result in \cite{SSZ} for Artin algebras.
\end{abstract}

\maketitle
\section{Introduction}
In this article, we assume that all algebras are non-semisimple connected Artin algebras and all modules are right modules if nothing is stated otherwise.
All our modules will be assumed to be finitely generated expect for Gorenstein projective or Gorenstein injective modules, where we will explicitly mention in the definitions when such modules might be not finitely generated.
By definition, in an algebra of finite global dimension there are no modules that have infinite projective or infinite injective dimension.
It is an interesting question whether the converse also holds and in \cite{SSZ} this was proved, giving a nice characterisation of algebras with finite global dimension:
\begin{theorem} \label{SSZtheorem}
Let $A$ be an Artin algebra. In case $A$ has infinite global dimension, there exists an indecomposable module of infinite projective dimension and infinite injective dimension.
\end{theorem}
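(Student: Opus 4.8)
I would argue the contrapositive: assuming that every indecomposable $A$-module has finite projective dimension or finite injective dimension (call this hypothesis $(H)$), I would prove $\operatorname{gldim} A < \infty$. Since $A$ has only finitely many simple modules and $\operatorname{gldim} A = \sup\{\operatorname{pd} S : S \text{ simple}\}$, it suffices to show that every simple module has finite projective dimension; equivalently, whenever $\operatorname{gldim} A = \infty$ one must exhibit an indecomposable of infinite projective \emph{and} infinite injective dimension, which contradicts $(H)$.

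The first step is a minimality reduction. Assume $(H)$ and that some indecomposable has infinite projective dimension; among all such, choose $M$ with $\operatorname{id} M$ (finite, by $(H)$) as small as possible, say equal to $d$. If $d \geq 1$, the injective envelope yields a short exact sequence $0 \to M \to E \to C \to 0$ with $E$ injective and $\operatorname{id} C \leq d-1$, and since $\operatorname{pd} M = \infty$ the standard long exact sequence forces $\operatorname{pd} E = \infty$ or $\operatorname{pd} C = \infty$; in either case there is an indecomposable summand of $E$ (injective dimension $0$) or of $C$ (injective dimension $\leq d-1$) of infinite projective dimension and injective dimension $< d$, contradicting minimality. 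Hence $d = 0$: under $(H)$ some indecomposable injective has infinite projective dimension, and dually some indecomposable projective has infinite injective dimension. In particular, if $(H)$ holds and $\operatorname{gldim} A = \infty$ then $A$ is not Gorenstein. The Gorenstein case of the theorem is itself immediate, since over an Iwanaga--Gorenstein algebra a module has finite projective dimension if and only if it has finite injective dimension, so a simple module of infinite projective dimension automatically has infinite injective dimension.

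So the real point is to contradict $(H)$ when $A$ is connected and not Gorenstein. Under $(H)$ the simple modules fall into three disjoint classes: $\mathcal{S}_p$ (infinite projective, hence finite injective, dimension), $\mathcal{S}_i$ (infinite injective, hence finite projective, dimension) and $\mathcal{S}_0$ (both finite), with $\mathcal{S}_p$ and $\mathcal{S}_i$ non-empty because $\operatorname{gldim} A = \infty$. If the Gabriel quiver of $A$ has an arrow between $\mathcal{S}_p$ and $\mathcal{S}_i$, say an arrow $S \to T$ (equivalently $\Ext^1_A(S,T) \neq 0$) with $S \in \mathcal{S}_p$ and $T \in \mathcal{S}_i$, I would take a non-split extension $0 \to T \to V \to S \to 0$: then $V$ has simple top $S$ and so is indecomposable, while the sequence forces $\operatorname{pd} V = \infty$ (since $\operatorname{pd} S = \infty$ but $\operatorname{pd} T < \infty$) and $\operatorname{id} V = \infty$ (since $\operatorname{id} T = \infty$ but $\operatorname{id} S < \infty$), contradicting $(H)$; an arrow $T \to S$ is handled symmetrically via $0 \to S \to V \to T \to 0$. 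This disposes of everything except the case where no arrow of the Gabriel quiver joins $\mathcal{S}_p$ to $\mathcal{S}_i$, i.e.\ every walk between them passes through $\mathcal{S}_0$ (so $\mathcal{S}_0 \neq \emptyset$).

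This last case is the main obstacle I anticipate. The plan would be to run the extension construction with suitable non-simple indecomposables in place of $S$ and $T$: by $(H)$, any module without composition factors in $\mathcal{S}_i$ has finite injective dimension, and dually for $\mathcal{S}_p$; and since $A$ is not Gorenstein there is an indecomposable projective $P$ of infinite injective dimension, which must then have a composition factor in $\mathcal{S}_i$. One would try to produce a non-split extension $0 \to B \to V \to D \to 0$ with $V$ indecomposable, $\operatorname{pd} D = \infty$, $\operatorname{pd} B < \infty$, $\operatorname{id} B = \infty$ and $\operatorname{id} D < \infty$, which again forces $\operatorname{pd} V = \operatorname{id} V = \infty$. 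The hard part will be securing indecomposability of $V$ together with all four finiteness conditions at once; I expect this ultimately requires an induction on the number of simple modules, passing to $A/AeA$ or $eAe$ for a carefully chosen primitive idempotent $e$ and controlling how projective and injective dimensions transfer across these non-exact truncation functors.
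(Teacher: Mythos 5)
Your reduction to the non-Gorenstein case is fine (and your handling of the Gorenstein case via $P^{<\infty}=I^{<\infty}$ is actually slicker than the paper, which quotes the existence of a non-projective indecomposable Gorenstein projective module together with a result of Ding--Geng), and the minimality argument producing an indecomposable injective of infinite projective dimension under $(H)$ is correct. But the proof has a genuine gap exactly where you flag it: the case in which no arrow of the Gabriel quiver joins $\mathcal{S}_p$ to $\mathcal{S}_i$ is the entire content of the theorem, and what you offer there is only a plan. The proposed route (induction on the number of simples via $A/AeA$ or $eAe$) is not a repair one can expect to carry out easily, since projective and injective dimensions are not controlled under these functors in general; nothing in the proposal makes that transfer precise.

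The missing idea, used both in \cite{SSZ} and (in Gorenstein-homological form) in this paper's Theorem \ref{maintheorem}, is to abandon extensions of a simple by a simple and instead extend a simple by a high syzygy. Concretely: take a module of infinite projective dimension (here an indecomposable injective $I$, or $D(A)$ itself), and use Lemma \ref{bensonlemma}: $\Ext^l(I,T)\neq 0$ exactly when the projective cover of the simple $T$ occurs in the $l$-th term of the minimal projective resolution, so some simple $T$ satisfies $\Ext^l(I,T)\neq 0$ for infinitely many $l$ and hence has infinite injective dimension --- regardless of whether $T$ is "adjacent" to $\mathcal{S}_p$ in the quiver. Choosing $r$ beyond the last occurrence of the remaining simples, one gets $\Ext^1(M,T)\neq 0$ for an indecomposable summand $M$ of $\Omega^{r-1}(I)$ of infinite projective dimension, hence a non-split sequence $0\to T\to E\to M\to 0$ with $\operatorname{pd}T<\infty$, $\operatorname{id}T=\infty$, $\operatorname{pd}M=\infty$, $\operatorname{id}M<\infty$ (the last two finiteness claims coming from $(H)$). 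Indecomposability of $E$ is then not automatic (unlike your length-two module $V$) and is obtained by choosing such a sequence with $M$ of minimal length and running the pullback argument along a split monomorphism $N'\to N$, as in the proof of Theorem \ref{maintheorem}. Your arrow case is the trivial instance of this; without the syzygy/Benson step and the minimal-length indecomposability argument, the proof is incomplete.
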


Recall that an algebra is a Gorenstein algebra in case the injective dimensions of the left and right regular modules are finite. In this case one can show that those two injective dimensions coincide and the common value is called the Gorenstein dimension of the algebra.
One goal of Gorenstein homological algebra is to generalise results from classicial homological algebra that involve the classical dimensions such as global, projective or injective dimension to their Gorenstein homological analogue.
The Gorenstein homological analogue of an algebra of finite global dimension is a Gorenstein algebra and the generalisation of projective dimension $pd(M)$ of a module $M$ is the Gorenstein projective dimension $Gpd(M)$ and the generalisation of injective dimension $id(M)$ is the Gorenstein injective dimension $Gid(M)$, we refer to the preliminaries for precise definitions. For algebras of finite global dimension, the Gorenstein dimension equals just the global dimension and the Gorenstein projective (injective) dimension of a module is simply the usual projective (injective) dimension of a module.
It is well known that in a Gorenstein algebra any indecomposable module has finite Gorenstein projective dimension and finite Gorenstein injective dimension (see for example \cite{Che} corollary 3.2.6.). The main result of this article is 

\begin{theorem} \label{mainresultintro}
Let $A$ be an Artin algebra. In case $A$ is not Gorenstein, there exists an indecomposable module of infinite Gorenstein projective dimension and infinite Gorenstein injective dimension.
\end{theorem}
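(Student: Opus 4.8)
The plan is to establish the contrapositive: assuming every indecomposable $A$-module has finite Gorenstein projective dimension \emph{or} finite Gorenstein injective dimension, show that $A$ is Gorenstein. I would first collect the formal properties of the Gorenstein dimensions parallel to those of $pd$ and $id$ (standard; cf.\ \cite{Che} and the preliminaries): the class $\mathcal{A}$ of modules of finite $Gpd$ is closed under summands, extensions, kernels of epimorphisms and cokernels of monomorphisms (dually $\mathcal{B}$, the modules of finite $Gid$); for $0\to X\to Y\to Z\to 0$ one has the usual ``two out of three'' inequalities among the $Gpd$'s and among the $Gid$'s; a reduced syzygy drops $Gpd$ by one, in particular preserves infiniteness of $Gpd$ (dually for cosyzygies and $Gid$); if $id(A_A)<\infty$ then $Gpd(M)\le id(A_A)$ for every $M$ of finite $Gpd$, and dually $Gid(M)\le id({}_AA)$; and $id(A_A)<\infty$ iff every indecomposable injective lies in $\mathcal{A}$ (the $Gpd$ of the injective cogenerator of $\mod A$ equals $id(A_A)$), while $id({}_AA)<\infty$ iff the regular module $A_A$ lies in $\mathcal{B}$ (one has $Gid(A_A)=id({}_AA)$). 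Using the standard duality $D$, under which $Gpd_A(M)=Gid_{A^{\mathrm{op}}}(DM)$, the hypothesis is self-dual, so it is enough to prove $id(A_A)<\infty$ and then apply the result to $A^{\mathrm{op}}$.

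So suppose $id(A_A)=\infty$ and fix an indecomposable injective $I$ with $Gpd(I)=\infty$. Taking reduced syzygies, and at each step an indecomposable summand of infinite $Gpd$, gives an infinite chain $I=M_0,M_1,M_2,\dots$ of indecomposables, none projective or injective (reduced syzygies have no injective summands), with $Gpd(M_i)=\infty$; by hypothesis each $M_i$ has $Gid(M_i)<\infty$. Now one argues as in \cite{SSZ}. If $id({}_AA)=d'<\infty$, then all the $Gid(M_i)$ are bounded by $d'$, so each $d'$-th cosyzygy $N_i:=\Omega^{-d'}M_i$ is Gorenstein injective; feeding the ``two out of three'' inequality for $Gpd$ through the injective coresolutions of the $M_i$, and using that there are only finitely many indecomposable injectives, forces a contradiction with $Gpd(I)=\infty$. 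If instead $id({}_AA)=\infty$, then $Gid(A_A)=\infty$, so some indecomposable projective $P$ has $Gid(P)=\infty$; replacing $P$ by a suitable reduced cosyzygy summand $C$ (still of infinite $Gid$, now of finite $Gpd$ and non-injective) and $I$ by $M_1$ (non-injective, $Gpd=\infty$, finite $Gid$), any nonsplit extension $0\to M_1\to X\to C\to 0$ has $Gpd(X)=Gid(X)=\infty$ by the ``two out of three'' inequalities, and one arranges $X$ to be indecomposable. Either way the hypothesis is contradicted, so $id(A_A)<\infty$; dually $id({}_AA)<\infty$, hence $A$ is Gorenstein.

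I expect the hard part to be this last, combinatorial core: faithfully transporting the argument of \cite{SSZ} to the Gorenstein setting. Two issues stand out. First, one must check that each property of $pd$, $id$ invoked there really does have a Gorenstein analogue; the subtle point is that infinite Gorenstein projective dimension is \emph{not} detected by nonvanishing of $\Ext^{*}(-,A)$, so the chain $M_0,M_1,\dots$ and its analysis must proceed via the ``$\Omega^{n}(-)$ eventually Gorenstein projective'' characterisation rather than via Ext -- whereas the uniform bound $Gpd(M)\le id(A_A)$ and its dual, which have no analogue for ordinary dimensions, are precisely what power the bounded-cosyzygy step. Second, one must secure indecomposability of the module built in the combining step; in \cite{SSZ} this is handled by a careful choice along a minimal syzygy, and adapting that device -- alongside the self-duality reduction above -- is where the genuine work lies.
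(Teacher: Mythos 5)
Your overall strategy has the right shape and matches the paper's in spirit: argue by contradiction/contraposition under the hypothesis that every indecomposable has finite $Gpd$ or finite $Gid$, reduce one side to the other by the duality $D$, and produce a non-split extension with one end of infinite $Gpd$ and the other of infinite $Gid$, whose middle term then has both dimensions infinite by the two-out-of-three inequalities (that computation of yours is correct, as are the closure properties from \ref{chenlemma}, the preservation of infinite $Gpd$ under syzygies as in \ref{gorprodimdimlemma2}, and the fact that injectives of infinite projective dimension have infinite $Gpd$, which is not formal but rests on \ref{syzygyinjectives}, i.e.\ \cite{Mar}). However, the two steps that constitute the actual content of the theorem are missing. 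First, you never establish that a non-split extension exists: writing ``any nonsplit extension $0\to M_1\to X\to C\to 0$'' presupposes $\Ext^1(C,M_1)\neq 0$, which is not justified for your choices. This is precisely what the paper's main construction supplies: the set $W$ of simples $T$ with $\Ext^l(D(A),T)\neq 0$ for infinitely many $l$ (these have infinite $Gid$ by \ref{chenlemma2}), the finite bound $w$, and a choice $r>w+1$ so that $\Ext^r(D(A),\bigoplus_{T\in W}T)\cong\Ext^1(\Omega^{r-1}(D(A)),\bigoplus_{T\in W}T)\neq 0$ yields an indecomposable syzygy summand $M$ of infinite $Gpd$ together with a simple $T$ of infinite $Gid$ and a non-split sequence $0\to T\to E\to M\to 0$. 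Second, the indecomposability of the middle term, which you defer (``one arranges $X$ to be indecomposable''), is not a routine adaptation of \cite{SSZ}: the paper's argument uses essentially that the submodule is \emph{simple} (so every coordinate map from $T$ into a summand of $E$ is a monomorphism) together with a minimal-length choice of $M$; with your non-simple $M_1$ and $C$ the device does not transfer as stated.

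There is also a left--right mix-up that breaks your first sub-case. One has $Gid(A_A)=id(A_A)$ (not $id({}_AA)$), and the $Gpd$ of the injective cogenerator of $\operatorname{mod} A$ equals $id({}_AA)$ (not $id(A_A)$), both via \ref{lemmagorprodiminj}. Consequently $id(A_A)=\infty$ does not give an indecomposable injective right module $I$ with $Gpd(I)=\infty$; and in your sub-case $id({}_AA)=d'<\infty$ every injective right module has projective dimension at most $d'$, so the chain $M_0=I,M_1,\dots$ you build does not exist there at all. That configuration ($id(A_A)=\infty$, $id({}_AA)<\infty$) is exactly the paper's Case~2, which is handled by passing to $A^{op}$ and running the full Case~1 construction; it is not dispatched by ``feeding two-out-of-three through the injective coresolutions'', since that would require the Gorenstein injective cosyzygies $N_i$ to have finite $Gpd$, which is unjustified and essentially as hard as the theorem itself. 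In short: the toolkit and the final extension computation are fine, but the existence of the non-split extension (via the $W$-construction) and the simple-socle/minimal-length indecomposability argument are the heart of the proof, and both are absent.
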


We then use our main result to obtain a quick proof of \ref{SSZtheorem}.

\section{preliminaries}
We assume that the reader is familiar with the classical notions of representation theory and homological algebra of Artin algebras as can be found in the book \cite{ARS}. We will recall the needed results from Gorenstein homological algebra.
The standard reference for Gorenstein homological algebra in Artin algebras is \cite{Che}. An Artin algebra $A$ is called a \emph{Gorenstein algebra} in case the injective dimensions of the right and left regular modules are finite. In this case it can be shown that the two values coincide and the common value is called the \emph{Gorenstein dimension} of $A$. In case the left or right injective dimension of the regular module is infinite, the algebra is said to have infinite Gorenstein dimension and is non-Gorenstein. In general it is an open conjecture, called the Gorenstein symmetry conjecture, that the left injective dimension of the regular module always coincides with the right injective dimension of the regular module.
An acyclic complex of (not necessarily finitely generated) projective $A$-modules $P^{\bullet}$ is called \emph{totally acyclic} in case the complex $Hom_A(P^{\bullet},Q)$ is acyclic for each (not necessarily finitely generated) projective module $Q$.
A (not necessarily finitely generated) module $G$ is called a \emph{Gorenstein projective module} in case $G$ is the zeroth cocycle of a totally acyclic complex. A (not necessarily finitely generated) module $G$ is called a \emph{Gorenstein injective module} in case $D(G)$ is Gorenstein projective.
We now define the \emph{Gorenstein projective dimension} $Gpd(N)$ of a module $N$ as $Gpd(N) \leq n$ for a natural number $n \geq 0$ iff for each exact sequene of the form $0 \rightarrow K \rightarrow G^{n-1} \rightarrow ... \rightarrow G^1 \rightarrow G^0 \rightarrow N \rightarrow 0$ with $G^i$ Gorenstein projective, we have that $K$ is Gorenstein projective (this definition uses the characterisation in theorem 3.2.5 of \cite{Che} for the Gorenstein projective dimension).
Dually the \emph{Gorenstein injective dimension} $Gid(N)$ of a module $N$ is defined as the Gorenstein projective dimension of the module $D(N)$.
We note that a module of finite projective dimension, has its Gorenstein projective dimension equal to the projective dimension. Dually, a module of finite injective dimension has its Gorenstein injective dimension equal to the injective dimension.
We will need the following results in the next section:
\begin{lemma} \label{chenlemma}
Let $0 \rightarrow L \rightarrow M \rightarrow N \rightarrow 0$ be a short exact sequence.
Then the following holds:
\begin{enumerate}
\item $Gpd(N) \leq max(Gpd(M),1+Gpd(L))$.
\item $Gpd(L) \leq max(Gpd(M),Gpd(N)-1)$.
\item $Gpd(M) \leq max(Gpd(L),Gpd(N))$.

\end{enumerate}
\end{lemma}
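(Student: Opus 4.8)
The plan is to view these three inequalities as the Gorenstein analogues of the classical inequalities relating the projective dimension to a short exact sequence, and to prove them by dimension shifting. First I would assemble the tools. Combining the definition of $Gpd$ with theorem 3.2.5 of \cite{Che} gives the convenient reformulation: for any module $X$ and any $n \ge 0$, one has $Gpd(X) \le n$ if and only if some (equivalently every) $n$-th syzygy $\Omega^n X$, taken with respect to a projective resolution and with the convention $\Omega^0 X = X$, is Gorenstein projective; the point is that the truncation of a projective resolution of $X$ after $n$ steps is an exact sequence of the shape occurring in the definition. I would also use that the class of Gorenstein projective modules contains the projectives and is closed under extensions and under direct summands; that the horseshoe lemma applied to $0 \to L \to M \to N \to 0$ yields, for each $k \ge 0$, compatible projective resolutions and hence an exact sequence of $k$-th syzygies $0 \to \Omega^k L \to \Omega^k M \to \Omega^k N \to 0$; and the elementary pullback observation that, given $0 \to X \to Y \to Z \to 0$ together with a projective presentation $0 \to \Omega Z \to R \to Z \to 0$, pulling $Y \twoheadrightarrow Z$ back along $R \twoheadrightarrow Z$ produces (the pullback surjects onto the projective $R$, so that sequence splits) an exact sequence $0 \to \Omega Z \to R \oplus X \to Y \to 0$. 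Finally, from the first two items one has $Gpd(R \oplus X) = Gpd(X)$ when $R$ is projective, and $Gpd(Y) \le 1 + Gpd(\Omega Y)$ for every module $Y$.

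With these in hand, parts (3) and (2) are short. For (3): if $n := \max(Gpd(L), Gpd(N))$ is finite then $\Omega^n L$ and $\Omega^n N$ are Gorenstein projective, so the horseshoe sequence presents $\Omega^n M$ as an extension of two Gorenstein projectives, whence $\Omega^n M$ is Gorenstein projective and $Gpd(M) \le n$. For (2): if $n := \max(Gpd(M), Gpd(N) - 1)$ is finite then $Gpd(M) \le n$ and $Gpd(N) \le n+1$, so in the horseshoe sequence $0 \to \Omega^n L \to \Omega^n M \to \Omega^n N \to 0$ the module $\Omega^n M$ is Gorenstein projective and $\Omega(\Omega^n N) = \Omega^{n+1} N$ is Gorenstein projective; applying the pullback observation to this sequence together with a projective presentation of $\Omega^n N$ gives an exact sequence $0 \to \Omega(\Omega^n N) \to R \oplus \Omega^n L \to \Omega^n M \to 0$ whose two outer terms are Gorenstein projective, so $R \oplus \Omega^n L$ is Gorenstein projective, hence so is its direct summand $\Omega^n L$, and therefore $Gpd(L) \le n$.

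For (1) I would not attempt the analogous syzygy argument, since it breaks down, but instead apply the pullback observation directly to $0 \to L \to M \to N \to 0$ and a projective presentation $0 \to \Omega N \to P \to N \to 0$, obtaining the exact sequence $0 \to \Omega N \to P \oplus L \to M \to 0$. Applying (2) to this sequence gives $Gpd(\Omega N) \le \max(Gpd(P \oplus L), Gpd(M) - 1) = \max(Gpd(L), Gpd(M) - 1)$, and then $Gpd(N) \le 1 + Gpd(\Omega N)$ delivers the asserted bound. The main obstacle, and the reason for this detour, is precisely (1): for (2) and (3) one can push everything down to a common high syzygy and invoke closure of the Gorenstein projectives under extensions, but this fails for (1) because a short exact sequence whose sub and total object are Gorenstein projective need not have a Gorenstein projective quotient, so one must first trade the bound on the quotient for a bound on a syzygy via the pullback, reducing (1) to (2). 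Everything else is routine bookkeeping, the only subtlety being that syzygies are well defined merely up to projective summands, which is harmless here because the Gorenstein projectives contain the projectives and are closed under summands.
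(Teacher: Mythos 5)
Your proof is correct, but be aware that the paper does not actually prove this lemma: its ``proof'' is the single line ``See \cite{Che}, corollary 3.2.4'', so there is no in-paper argument to compare against, and what you have supplied is essentially the standard proof from the literature (Chen's notes, or Holm's work on Gorenstein dimensions) written out in full. Your route --- reduce (2) and (3) to a common high syzygy via the horseshoe lemma and the resolving properties of the Gorenstein projectives (closure under extensions and summands, projectives being Gorenstein projective), use the pullback trick producing $0 \rightarrow \Omega Z \rightarrow R \oplus X \rightarrow Y \rightarrow 0$, and then obtain (1) by applying (2) to the pulled-back sequence $0 \rightarrow \Omega N \rightarrow P \oplus L \rightarrow M \rightarrow 0$ together with $Gpd(N) \leq 1 + Gpd(\Omega N)$ --- is sound, and your diagnosis of why the naive syzygy argument fails for (1) (the quotient of a monomorphism between Gorenstein projectives need not be Gorenstein projective) is exactly right. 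Two points deserve emphasis. First, your appeal to theorem 3.2.5 of \cite{Che} is doing genuine work in (2) and (3): with the paper's chosen definition of $Gpd(X) \leq n$ (``for \emph{each} exact sequence with Gorenstein projective terms the kernel is Gorenstein projective''), the implication ``\emph{some} $n$-th syzygy is Gorenstein projective $\Rightarrow Gpd(X) \leq n$'' is precisely the nontrivial comparison content of 3.2.5, so it is right that you invoke it explicitly rather than treat it as a triviality. Second, since in \cite{Che} corollary 3.2.4 (the statement being proved) precedes theorem 3.2.5, one should check that the characterization theorem and the resolving property of the Gorenstein projective class are established independently of the present inequalities; they are (this is the order followed, e.g., by Holm), so your derivation is not circular --- it simply trades the paper's citation for a self-contained argument built on the same two ingredients the paper already assumes, namely the resolving property and the characterization it uses as its definition of $Gpd$.
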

\begin{proof}
See \cite{Che}, corollary 3.2.4.
\end{proof}

\begin{lemma} \label{bensonlemma}
Let $A$ be an Artin algebra, $N$ be an indecomposable $A$-module and $S$ a simple $A$-module.
Let $(P_i)$ be a minimal projective resolution of $N$ and $(I_i)$ a minimal injective coresolution of $N$. 
\begin{enumerate}
\item For $l \geq 0$, $Ext^{l}(N,S) \neq 0$ iff $S$ is a quotient of $P_l$. 
\item For $l \geq 0$, $Ext^{l}(S,N) \neq 0$ iff $S$ is a submodule of $I_l$.
\end{enumerate}
\end{lemma}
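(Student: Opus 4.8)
The plan is to prove both halves by exploiting minimality: in each case the complex computing $\Ext^{\bullet}$ against (resp.\ out of) the simple module $S$ has \emph{all differentials zero}, so the $\Ext$-groups are read off directly from the terms of the resolution.

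\textbf{Part (1).} First I would apply $\Hom_A(-,S)$ to the minimal projective resolution $\cdots \to P_2 \xrightarrow{\partial_2} P_1 \xrightarrow{\partial_1} P_0 \to N \to 0$. Minimality says $\operatorname{im}(\partial_{i+1}) \subseteq \rad P_i$ for every $i \geq 0$; hence for any $\varphi \in \Hom_A(P_i,S)$ the composite $\varphi\circ\partial_{i+1}$ maps $P_{i+1}$ into $\varphi(\rad P_i)\subseteq \rad S = 0$, since $S$ is simple. Thus every differential of the complex $\Hom_A(P_{\bullet},S)$ vanishes, and $\Ext^{l}(N,S)=\Hom_A(P_l,S)$ for all $l\geq 0$. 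Finally, $\Hom_A(P_l,S)\cong\Hom_A(\top P_l,S)$ is nonzero precisely when the simple module $S$ is a direct summand of the semisimple module $\top P_l$, i.e.\ exactly when $S$ is a quotient of $P_l$.

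\textbf{Part (2).} This is the dual statement. One can either repeat the argument with the minimal injective coresolution $0\to N\to I_0\xrightarrow{\partial^0} I_1\xrightarrow{\partial^1}\cdots$, using that minimality makes $N$ essential in $I_0$ and $\operatorname{im}(\partial^{l-1})=\ker(\partial^l)$ essential in $I_l$, so that $\soc I_l\subseteq\ker(\partial^l)$ for all $l\geq 0$; then any $\psi\in\Hom_A(S,I_l)$ has image in $\soc I_l$, whence $\partial^l\circ\psi=0$, the complex $\Hom_A(S,I_{\bullet})$ has zero differentials, and $\Ext^{l}(S,N)=\Hom_A(S,I_l)\cong\Hom_A(S,\soc I_l)$, which is nonzero iff $S$ is a summand of $\soc I_l$, i.e.\ iff $S$ is a submodule of $I_l$. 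Alternatively, one applies the duality $D$ and invokes Part (1) for $D(N)$, since $D$ carries a minimal injective coresolution of $N$ to a minimal projective resolution of $D(N)$ and interchanges "submodule of $I_l$" with "quotient of $D(I_l)$".

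The argument is entirely standard (this is the lemma usually attributed to Benson), so I do not expect a genuine obstacle. The only point that needs a little care is the translation of the minimality hypothesis for an injective coresolution into the inclusion $\soc I_l\subseteq\ker(\partial^l)$, which is the precise dual of the familiar $\operatorname{im}(\partial_{i+1})\subseteq\rad P_i$ — and invoking the duality $D$ avoids even this bookkeeping. (Note that indecomposability of $N$ is not actually used; it is included only because that is the form in which the lemma will be applied.)
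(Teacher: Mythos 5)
Your proof is correct. The paper does not actually argue this lemma at all: it simply cites Benson's book (Corollary 2.5.4), and your argument is precisely the standard proof behind that citation — minimality gives $\operatorname{im}(\partial_{i+1})\subseteq\rad P_i$, hence every differential of $\Hom_A(P_{\bullet},S)$ vanishes and $\Ext^l(N,S)=\Hom_A(P_l,S)\cong\Hom_A(\top P_l,S)$, with the dual (or $D$-duality) argument for the injective coresolution. Your auxiliary points are also right: essentiality of $\ker(\partial^l)$ in $I_l$ does give $\soc I_l\subseteq\ker(\partial^l)$, since every simple submodule of $I_l$ meets, hence lies in, an essential submodule; and indecomposability of $N$ plays no role in the statement.
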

\begin{proof}
See \cite{Ben} corollary 2.5.4. 
\end{proof}

\begin{lemma} \label{Gorprodimlemma}
Let $A$ be an Artin algebra with an indecomposable non-projective Gorenstein projective module $G$. Then $G$ has infinite projective dimension and infinite injective dimension.
\end{lemma}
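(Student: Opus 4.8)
The plan is to establish the two conclusions separately; the statement on projective dimension is formal, and the one on injective dimension needs a short argument with a complete resolution of $G$. For the projective dimension: if $pd(G)$ were finite then, by the remark recalled in the preliminaries, $pd(G)=Gpd(G)$; but $G$ is Gorenstein projective, so $Gpd(G)=0$, and hence $pd(G)=0$, i.e.\ $G$ is projective, contradicting the hypothesis. Therefore $pd(G)=\infty$.

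For the injective dimension I would argue by contradiction: assume $id(G)=n<\infty$ and deduce that $G$ is projective. Since $G$ is Gorenstein projective, fix a totally acyclic complex of projective modules $T\colon \cdots\to P^{-1}\xrightarrow{d^{-1}}P^{0}\xrightarrow{d^{0}}P^{1}\to\cdots$ with $G=\ker d^{0}=\operatorname{im}d^{-1}$, and factor $d^{-1}=\iota p$ with $p\colon P^{-1}\twoheadrightarrow G$ and $\iota\colon G\hookrightarrow P^{0}$. The heart of the matter is that the complex $\Hom_{A}(T,G)$ is exact. To see this, choose a finite injective coresolution $0\to G\to I^{0}\to\cdots\to I^{n}\to 0$; since each $I^{j}$ is injective the functor $\Hom_{A}(-,I^{j})$ is exact, so $\Hom_{A}(T,I^{j})$ is exact because $T$ is an exact complex. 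Splitting the coresolution into short exact sequences and applying $\Hom_{A}(T,-)$ --- a functor which preserves short exactness because every term of $T$ is projective --- yields short exact sequences of complexes, and a descending induction using the associated long exact cohomology sequences (starting from $\Hom_{A}(T,I^{n})$) shows that $\Hom_{A}(T,-)$ is exact on every cosyzygy of $G$, hence on $G$ itself; this is the classical vanishing of the Tate cohomology $\widehat{\Ext}^{\,*}(G,-)$ on modules of finite injective dimension. Now exploit exactness of $\Hom_{A}(T,G)$ at the term $\Hom_{A}(P^{-1},G)$: since $\operatorname{im}d^{-2}=\ker d^{-1}=\ker p$ we get $p\circ d^{-2}=0$, so $p$ is a cocycle there, and therefore $p=h\circ d^{-1}=h\iota p$ for some $h\colon P^{0}\to G$; surjectivity of $p$ forces $h\iota=1_{G}$. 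Hence $\iota$ splits and $G$ is a direct summand of the projective module $P^{0}$, so $G$ is projective --- the desired contradiction, which completes the proof.

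The only non-formal ingredient is the exactness of $\Hom_{A}(T,G)$, and the sole point requiring care there is the bookkeeping in the descending induction. I remark that the second half of the argument in fact shows that every Gorenstein projective module of finite injective dimension is projective, so indecomposability of $G$ is not actually needed for this lemma, although it is what makes the statement convenient for the applications in the next section.
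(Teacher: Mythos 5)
Your proof is correct, but it is genuinely different from what the paper does: the paper gives no argument at all for this lemma and simply cites \cite{DG}, Corollary 2.9. Your first half (finite projective dimension plus Gorenstein projectivity forces $Gpd(G)=pd(G)=0$, hence projectivity) is the standard formal argument and matches the remark recalled in the preliminaries. Your second half is a self-contained proof of the classical fact that $\widehat{\Ext}^{*}(G,-)$ vanishes on modules of finite injective dimension: the dimension-shifting induction over the injective coresolution is set up correctly (exactness of $\Hom_A(T,I^j)$ needs only that $T$ is acyclic and $\Hom_A(-,I^j)$ is exact; the short exact sequences of complexes need only that the terms of $T$ are projective, so total acyclicity is not even used in this half), and the splitting argument via $p=h\iota p$ and surjectivity of $p$ is correct, yielding that $G$ is a summand of $P^0$. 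Your closing observation is also right and worth noting: your argument proves the stronger statement that \emph{any} Gorenstein projective module of finite projective or finite injective dimension is projective, so indecomposability plays no role in the lemma. What the paper's citation buys is brevity and consistency with its policy of outsourcing Gorenstein-homological facts to \cite{Che} and \cite{DG}; what your argument buys is a short, elementary, self-contained proof that makes the paper independent of \cite{DG} at this point.
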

\begin{proof}
See \cite{DG}, corollary 2.9.
\end{proof}

\begin{lemma} \label{chenlemma2}
Let $A$ be an Artin algebra and $M$ an $A$-module.
Then $Gid(M) \geq \sup \{ t \geq 0 | Ext^t(D(A),M) \neq 0 \}$ and we have equality in case $Gid(M)$ is finite.
\end{lemma}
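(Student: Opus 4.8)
The plan is to transport the statement to the opposite algebra, where it becomes a statement about Gorenstein \emph{projective} dimension, and to prove it there. By definition $Gid(M) = Gpd(D(M))$, and since the duality $D\colon \mod A \to \mod A^{\mathrm{op}}$ is exact with $D^{2} \cong \mathrm{id}$ it induces natural isomorphisms $Ext^{t}_{A}(D(A), M) \cong Ext^{t}_{A^{\mathrm{op}}}(D(M), D(D(A))) \cong Ext^{t}_{A^{\mathrm{op}}}(D(M), A)$ for all $t \geq 0$, where on the right $A$ stands for the regular right $A^{\mathrm{op}}$-module. Writing $B = A^{\mathrm{op}}$ and $N = D(M)$, it therefore suffices to prove the following for an arbitrary Artin algebra $B$ and $N \in \mod B$: $Gpd(N) \geq \sup\{\, t \geq 0 \mid Ext^{t}_{B}(N, B) \neq 0 \,\}$, with equality whenever $Gpd(N)$ is finite. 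Since every projective module is a summand of a finite direct sum of copies of $B$ and $Ext$ commutes with finite direct sums, the supremum is unchanged if $B$ is replaced in the vanishing condition by an arbitrary projective module.

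For the inequality, assume $Gpd(N) = n < \infty$ (otherwise there is nothing to prove) and pick a projective resolution of $N$, which yields an exact sequence $0 \to \Omega^{n} N \to P_{n-1} \to \dots \to P_{0} \to N \to 0$ in which $\Omega^{n}N$ is Gorenstein projective because $Gpd(N) = n$ (syzygy characterisation of $Gpd$, as recalled in the preliminaries). Gorenstein projective modules have no higher extensions into projective modules, so $Ext^{\geq 1}_{B}(\Omega^{n}N, B) = 0$; splicing the displayed sequence into short exact sequences and shifting dimensions then gives $Ext^{t}_{B}(N, B) \cong Ext^{t-n}_{B}(\Omega^{n}N, B) = 0$ for every $t > n$, which is the claimed inequality.

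For the equality I would induct on $n = Gpd(N) < \infty$, aiming to exhibit a projective module $Q$ with $Ext^{n}_{B}(N, Q) \neq 0$. If $n = 0$ and $N \neq 0$, then $N$ is Gorenstein projective, hence embeds into a projective module $Q$, and that embedding is a nonzero element of $\Hom_{B}(N, Q)$. If $n \geq 1$, observe first that $\Omega^{n-1}N$ is \emph{not} Gorenstein projective — otherwise it would be a Gorenstein projective $(n-1)$-st syzygy of $N$, forcing $Gpd(N) \leq n-1$ — whereas $\Omega^{n}N$ is Gorenstein projective. Dimension shifting along the resolution above reduces the problem to finding a projective $Q$ with $Ext^{1}_{B}(\Omega^{n-1}N, Q) \neq 0$, equivalently to showing that not every homomorphism $\Omega^{n}N \to Q$, with $Q$ ranging over projectives, factors through the syzygy inclusion $\Omega^{n}N \hookrightarrow P_{n-1}$. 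This is the crux, and I would argue it by contradiction: fix an embedding $\Omega^{n}N \hookrightarrow Q$ into a projective module with Gorenstein projective cokernel, form the pushout of this embedding along $\Omega^{n}N \hookrightarrow P_{n-1}$, and suppose the embedding extended to $P_{n-1}$; then the universal property of the pushout produces a retraction onto $Q$, so the pushout is isomorphic both to $Q \oplus \Omega^{n-1}N$ and, since $P_{n-1}$ is projective and $Q/\Omega^{n}N$ is Gorenstein projective, to $P_{n-1} \oplus Q/\Omega^{n}N$; comparing these decompositions and using that Gorenstein projective modules are closed under direct summands would make $\Omega^{n-1}N$ Gorenstein projective, a contradiction. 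I expect this pushout-and-splitting step — in essence the non-vanishing of the relevant $Ext^{1}$ — to be the only real obstacle; the remainder is bookkeeping with Lemma~\ref{chenlemma} and dimension shifting. Alternatively, the whole statement is a special case of Holm's characterisations of Gorenstein homological dimensions, and can also be quoted directly from \cite{Che}.
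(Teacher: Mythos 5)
Your proposal is correct, but it takes a genuinely different route from the paper: the paper disposes of this lemma in one line by quoting the dual of Proposition 3.2.2 in \cite{Che}, whereas you reprove that statement from scratch. Your reduction --- transport through the duality $D$ so that the claim becomes $Gpd(N) \geq \sup\{t \geq 0 : Ext^t_B(N,B) \neq 0\}$ over $B = A^{\mathrm{op}}$, kill $Ext^{>n}(N,B)$ by dimension shifting onto the Gorenstein projective syzygy $\Omega^n N$, and force $Ext^n(N,Q) \neq 0$ for some projective $Q$ via the pushout of $\Omega^n N \hookrightarrow P_{n-1}$ along an embedding of $\Omega^n N$ into a projective with Gorenstein projective cokernel --- is essentially the standard proof of that proposition (cf.\ Holm's Theorem 2.20), and the pushout-splitting step is sound: the hypothetical extension of the embedding to $P_{n-1}$ splits $Q \to X$, the vanishing $Ext^1(Q/\Omega^n N, P_{n-1})=0$ splits $P_{n-1} \to X$, and closure of Gorenstein projectives under direct summands then makes $\Omega^{n-1}N$ Gorenstein projective, contradicting $Gpd(N)=n$. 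Two facts you invoke deserve a citation rather than a wave: closure of Gorenstein projective modules under direct summands (Holm), and --- since the complete resolution of $\Omega^n N$ need not consist of finitely generated projectives --- the fact that $Ext^t(N,-)$ commutes with arbitrary direct sums for finitely generated $N$ over an Artin algebra, which is what actually allows you to replace an arbitrary projective $Q$ by $B$ in the vanishing condition; your justification that every projective is a summand of a \emph{finite} direct sum of copies of $B$ only covers finitely generated $Q$. With those references supplied your argument is a complete, self-contained proof; the paper's citation buys brevity, yours buys independence from \cite{Che}.
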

\begin{proof}
This is a special of the the dual of proposition 3.2.2. in \cite{Che}.
\end{proof}

\begin{proposition} \label{syzygyinjectives}
Let $A$ be an Artin algebra and $I$ an indecomposable injective $A$-module of infinite projective dimension. Then $\Omega^i(I)$ is never Gorenstein projective for all $i \geq 0$.
\end{proposition}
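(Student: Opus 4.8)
The plan is to argue by contradiction: suppose $\Omega^{i}(I)$ is Gorenstein projective for some $i\ge 0$, the syzygies being taken from a minimal projective resolution $\cdots\to P_{1}\to P_{0}\to I\to 0$. The case $i=0$ is the prototype and is immediate: if $I$ is Gorenstein projective it cannot be projective (that would force $pd(I)=0$), so it is an indecomposable non-projective Gorenstein projective module, and \ref{Gorprodimlemma} yields $id(I)=\infty$, contradicting the injectivity of $I$. Assume now $i\ge 1$. The exact sequence $0\to\Omega^{i}(I)\to P_{i-1}\to\cdots\to P_{0}\to I\to 0$, in which the $P_{k}$ are projective and $\Omega^{i}(I)$ is Gorenstein projective, shows (by the characterisation of Gorenstein projective dimension recalled in the preliminaries) that $Gpd(I)\le i<\infty$; since $id(I)=0$ as well, it is enough to derive $pd(I)<\infty$.

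To that end, pass to $G=\Omega^{n}(I)$ with $n=Gpd(I)$, which is Gorenstein projective. If $G$ had an indecomposable non-projective direct summand it would, by \ref{Gorprodimlemma}, have infinite injective dimension; hence it suffices to prove $id(G)<\infty$, since then $G$ is projective (a Gorenstein projective module of finite injective dimension is projective) and $pd(I)\le n<\infty$. Towards bounding $id(G)$, note first that $G$ Gorenstein projective gives $Ext^{j}(G,A)=0$ for all $j\ge1$, whence by dimension shifting $Ext^{j}(I,A)=0$ for all $j>n$ and therefore $Ext^{j}(I,Q)=0$ for every projective $Q$ and every $j>n$. Combining this with $Ext^{\ge1}(I,I)=0$ (which holds because $I$ is injective) and chasing the long exact sequences coming from $0\to\Omega^{k+1}(I)\to P_{k}\to\Omega^{k}(I)\to0$ for $k=n-1,\dots,0$, one obtains $Ext^{u}(I,G)=0$ for all large $u$.

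The main obstacle is to turn this into a contradiction with $id(G)=\infty$. Via \ref{bensonlemma}, $id(G)=\infty$ says that $Ext^{u}(S,G)\ne0$ for arbitrarily large $u$ and suitable simple modules $S$, and since there are only finitely many simples, infinitely many of these must have infinite projective dimension; the difficulty is that the vanishing just produced is only against $I$ and against modules of finite projective dimension. I expect the argument to close the gap by also exploiting $Gpd(\Omega^{k}(I))\le n-k$ (from \ref{chenlemma}) together with \ref{chenlemma2}, analysing the Gorenstein injective dimensions of the successive syzygies $\Omega^{k}(I)$ of $I$ and pushing the problem down to $k=0$, where the injectivity of $I$ is available; alternatively, if the paper has at hand the principle that finite Gorenstein projective dimension together with finite injective dimension forces finite projective dimension, the contradiction follows at once from $Gpd(I)\le i$ and $id(I)=0$. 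Either way, the hypothesis $pd(I)=\infty$ is used essentially through \ref{Gorprodimlemma}, to ensure the summand $G$ is non-projective, hence of infinite injective (and projective) dimension in the first place.
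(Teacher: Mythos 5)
Your treatment of the case $i=0$ and the reduction of the statement to ``an injective module one of whose syzygies is Gorenstein projective has finite projective dimension'' are fine, but the argument stops exactly where the content of the proposition lies, and you say so yourself. The route you propose for the missing step --- prove $id(\Omega^{n}(I))<\infty$ for $n=Gpd(I)$ and then invoke \ref{Gorprodimlemma} to force projectivity --- cannot be carried out with the tools you assemble: nothing in the hypotheses bounds the injective dimension of a syzygy of $I$ (injectivity of $I$ gives no control over $id(\Omega^{n}(I))$; even projective modules can have infinite injective dimension, so this finiteness is not forced by the conclusion you are after), and the vanishing you do establish, $Ext^{u}(I,G)=0$ for large $u$, sits in the wrong variable: by \ref{bensonlemma}(2), finiteness of $id(G)$ means $Ext^{u}(S,G)=0$ for all simple $S$ and large $u$, while \ref{chenlemma2} concerns the Gorenstein injective dimension and Ext from $D(A)$, so neither is reachable from vanishing against $I$ and against modules of finite projective dimension. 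The ``principle'' you fall back on (finite Gorenstein projective dimension plus finite injective dimension implies finite projective dimension) is in fact true, but it is essentially the statement to be proved here (the case $id=0$) and is not available in this paper, which does not prove the proposition internally either but quotes Proposition 2.2 of \cite{Mar}.

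A way to close the gap is to use the right-hand half of the complete resolution of $G=\Omega^{i}(I)$ instead of trying to bound an injective dimension. Since $G$ is Gorenstein projective there are exact sequences $0\to G\to Q_{0}\to C_{1}\to 0$ and $0\to C_{j}\to Q_{j}\to C_{j+1}\to 0$ with all $Q_{j}$ projective and all $C_{j}$ Gorenstein projective. Shifting in the first variable along these sequences gives $Ext^{1}(C_{1},G)\cong Ext^{i+1}(C_{i+1},G)$, and shifting in the second variable along the minimal projective resolution of $I$ (legitimate because $Ext^{\geq 1}(C_{i+1},P_{k})=0$ for the projectives $P_{k}$, as $C_{i+1}$ is Gorenstein projective) gives $Ext^{i+1}(C_{i+1},\Omega^{i}(I))\cong Ext^{1}(C_{i+1},I)=0$, the last equality because $I$ is injective. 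Hence $0\to G\to Q_{0}\to C_{1}\to 0$ splits, so $\Omega^{i}(I)$ is projective and $pd(I)\leq i$, contradicting $pd(I)=\infty$. This argument treats all $i\geq 0$ uniformly and uses neither \ref{Gorprodimlemma} nor the indecomposability of $I$.
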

\begin{proof}
See proposition 2.2. in \cite{Mar}.
\end{proof}
\section{Main result}

Before we give a proof of our main result, we need two lemmas.
\begin{lemma} \label{gorprodimdimlemma2}
Let $N$ be a module of infinite Gorenstein projective dimension.
Then also $\Omega^p(N)$ has infinite Gorenstein projective dimension for any $p \geq 0$.

\end{lemma}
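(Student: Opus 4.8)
The plan is to argue by induction on $p$, the case $p=0$ being vacuous since $\Omega^{0}(N)=N$. For the inductive step it suffices to establish the case $p=1$ and then apply it to the module $\Omega^{p-1}(N)$, which has infinite Gorenstein projective dimension by the inductive hypothesis; hence so does $\Omega\bigl(\Omega^{p-1}(N)\bigr)=\Omega^{p}(N)$.

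So assume $p=1$ and choose any short exact sequence $0\rightarrow \Omega(N)\rightarrow P\rightarrow N\rightarrow 0$ with $P$ projective, for instance the one arising from a projective cover of $N$. Applying part (1) of Lemma \ref{chenlemma} to this sequence, and using that $Gpd(P)\leq 0$ since $P$ is projective, we obtain
\[
Gpd(N)\ \leq\ \max\bigl(Gpd(P),\,1+Gpd(\Omega(N))\bigr)\ =\ 1+Gpd(\Omega(N)),
\]
where the last equality holds because $N$ is not projective (it has infinite, hence positive, Gorenstein projective dimension), so $\Omega(N)\neq 0$ and $1+Gpd(\Omega(N))\geq 1 \geq Gpd(P)$. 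Reading this contrapositively: if $Gpd(\Omega(N))$ were finite, then $Gpd(N)$ would be finite as well, contradicting the hypothesis. Therefore $Gpd(\Omega(N))=\infty$, which is exactly the case $p=1$.

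I do not expect a genuine obstacle here: the argument is essentially a single application of Lemma \ref{chenlemma}(1) in the direction that bounds $Gpd(N)$ from above in terms of $Gpd$ of its syzygy, followed by a trivial induction. The only point worth remarking is that the conclusion does not depend on how the syzygy is formed, because Lemma \ref{chenlemma} is stated for arbitrary short exact sequences; thus the kernel of \emph{any} surjection from a projective module onto $N$ has infinite Gorenstein projective dimension.
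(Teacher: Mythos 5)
Your proposal is correct and uses exactly the paper's argument: induct on $p$ and apply Lemma \ref{chenlemma}(1) to the short exact sequence $0 \rightarrow \Omega^p(N) \rightarrow P_{p-1} \rightarrow \Omega^{p-1}(N) \rightarrow 0$, using that the middle term has Gorenstein projective dimension $0$. The only cosmetic difference is that you isolate the case $p=1$ and iterate, while the paper runs the same estimate directly inside the induction step.
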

\begin{proof}
We prove this by induction on $p$, where the case $p=0$ is trivial since $\Omega^{0}(N)=N$.
Now assume that $p \geq 1$ and $\Omega^{p-1}(N)$ has infinite Gorenstein projective dimension. We show that $\Omega^p(N)$ has also infinite Gorenstein projective dimension by looking at the following short exact sequence:
$$0 \rightarrow \Omega^p(N) \rightarrow P_{p-1} \rightarrow \Omega^{p-1}(N) \rightarrow 0,$$
where $P_{p-1}$ is the projective cover of $\Omega^{p-1}(N)$. \newline
By \ref{chenlemma}, we have $Gpd(\Omega^{p-1}(N)) \leq max(Gpd(P_{p-1}),Gpd(\Omega^{p}(N)+1)$. Now $Gpd(P_{p-1})=0$ and thus $Gpd(\Omega^{p}(N)) < \infty$ would imply that also  $Gpd(\Omega^{p-1}(N)) < \infty$, which gives a contradiction. This proves that the modules $\Omega^{p}(N)$ have infinite Gorenstein projective dimension for $p \geq 0$.

\end{proof}

\begin{lemma} \label{lemmagorprodiminj}
Let $A$ be an Artin algebra.
\begin{enumerate}
\item Let $I$ be an indecomposable injective $A$-module with infinite projective dimension. Then $I$ also has infinite Gorenstein projective dimension.
\item Let $P$ be an indecomposable projective $A$-module with infinite injective dimension. Then $P$ also has infinite Gorenstein injective dimension.
\end{enumerate}
\end{lemma}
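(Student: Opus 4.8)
The two statements are dual to each other under the duality $D$: applying $D$ interchanges projective and injective modules, interchanges Gorenstein projective and Gorenstein injective dimension (by the very definition $Gid(N) = Gpd(D(N))$), and preserves projective/injective dimension appropriately. So I would prove (1) carefully and then deduce (2) by applying (1) to the opposite algebra $A^{op}$ together with the duality, noting that $P$ indecomposable projective with $id(P) = \infty$ corresponds to $D(P)$ indecomposable injective over $A^{op}$ with $pd(D(P)) = \infty$, and $Gid_A(P) = Gpd_{A^{op}}(D(P))$.

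For statement (1): let $I$ be indecomposable injective with $pd(I) = \infty$. The plan is to argue by contradiction, supposing $Gpd(I) = n < \infty$. The idea is to run down a minimal projective resolution of $I$ to reach a syzygy that is Gorenstein projective, and then invoke Proposition \ref{syzygyinjectives} for the contradiction. Concretely, by Lemma \ref{gorprodimdimlemma2} (contrapositive) the hypothesis $Gpd(I) = n < \infty$ is consistent with all syzygies having finite Gorenstein projective dimension; in fact the standard fact about Gorenstein projective dimension is that $\Omega^n(I)$ is Gorenstein projective whenever $Gpd(I) \le n$ (this follows from the characterisation recalled in the preliminaries: take the truncation $0 \to \Omega^n(I) \to P_{n-1} \to \cdots \to P_0 \to I \to 0$ of the minimal projective resolution; since all $P_i$ are Gorenstein projective and $Gpd(I) \le n$, the kernel $\Omega^n(I)$ is Gorenstein projective). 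But Proposition \ref{syzygyinjectives} says precisely that $\Omega^i(I)$ is never Gorenstein projective for any $i \ge 0$ when $I$ is indecomposable injective of infinite projective dimension — contradiction. Hence $Gpd(I) = \infty$.

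The only point requiring a little care is the assertion that $Gpd(I) \le n$ forces $\Omega^n(I)$ to be Gorenstein projective; this is exactly the content of the definition of Gorenstein projective dimension as recalled (applied to the exact sequence obtained by truncating the minimal projective resolution at step $n$, whose terms $P_0, \dots, P_{n-1}$ are projective, hence Gorenstein projective). I expect the main obstacle to be purely bookkeeping: making sure the indices in the truncated resolution line up so that the relevant kernel really is $\Omega^n(I)$ and that Proposition \ref{syzygyinjectives} applies to this particular syzygy. Once that is in place, both parts follow immediately.
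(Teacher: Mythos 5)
Your argument is correct and is essentially the paper's own proof: both take the truncated minimal projective resolution, use the definition (characterisation) of Gorenstein projective dimension to conclude that some syzygy $\Omega^n(I)$ would be Gorenstein projective if $Gpd(I)$ were finite, and contradict Proposition \ref{syzygyinjectives}, with part (2) obtained by duality. You simply spell out the bookkeeping that the paper leaves implicit.
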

\begin{proof}
\begin{enumerate}
\item Let 
$$... P_i \rightarrow P_{i-1} \rightarrow ... \rightarrow P_1 \rightarrow P_0 \rightarrow I \rightarrow 0$$
be a minimal projective resolution of $I$. By \ref{syzygyinjectives}, $\Omega^i(I)$ is never Gorenstein projective for any $i \geq 0$ and thus by the definition of the Gorenstein projective dimension $I$ can not have finite Gorenstein projective dimension.
\item The proof is dual to the proof for (1).

\end{enumerate}

\end{proof}

\begin{theorem} \label{maintheorem}
Let $A$ be an Artin algebra that is not Gorenstein.
Then there is an indecomposable $A$-module with infinite Gorenstein projective dimension and infinite Gorenstein injective dimension. 

\end{theorem}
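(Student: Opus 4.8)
Since $A$ is not Gorenstein, by the Gorenstein symmetry setup we may assume (after possibly replacing $A$ by $A^{op}$, which swaps the two conclusions via the duality $D$) that the right regular module $A$ has infinite injective dimension; equivalently $id(A_A) = \infty$, so there is an indecomposable projective module $P$ with $id(P) = \infty$. Dually, $id(D(A))$ infinite as a left module means $pd({}_AD(A)) $... — more directly, the condition ``$A$ not Gorenstein'' gives us, after choosing the appropriate side, an indecomposable \emph{injective} $A$-module $I$ with $pd(I) = \infty$. These are the two kinds of ``witness'' modules I want to exploit: by Lemma \ref{lemmagorprodiminj}, such an $I$ already has $Gpd(I) = \infty$, and such a $P$ already has $Gid(P) = \infty$. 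The whole problem is therefore to upgrade \emph{one} infinite Gorenstein dimension to \emph{both} on a single indecomposable module.

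First I would set up the bookkeeping. Let me argue by contradiction: suppose every indecomposable $A$-module with $Gpd = \infty$ has $Gid < \infty$, and (dually) every indecomposable with $Gid = \infty$ has $Gpd < \infty$. Pick the indecomposable injective $I$ with $pd(I) = \infty$, hence $Gpd(I) = \infty$ by Lemma \ref{lemmagorprodiminj}(1); our assumption forces $Gid(I) = n < \infty$ for some $n$. Now I would run the syzygy machinery: by Lemma \ref{gorprodimdimlemma2}, every syzygy $\Omega^i(I)$ still has $Gpd = \infty$, so each indecomposable summand of each $\Omega^i(I)$ with $Gpd=\infty$ must have finite $Gid$. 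The aim is to push this finiteness around a minimal projective resolution of $I$ and derive a contradiction with the fact that $pd(I) = \infty$, i.e. with the fact that infinitely many distinct simples occur as tops of the $P_i$ (via Lemma \ref{bensonlemma}(1)). The key tension to exploit: $Gid$ is controlled by $\Ext^t(D(A), -)$ (Lemma \ref{chenlemma2}), while the obstruction living in the resolution of $I$ is detected by $\Ext$ \emph{into} simples; combining Lemma \ref{bensonlemma}(2) with Lemma \ref{chenlemma2} should let me convert ``$Gid(\Omega^i(I))$ finite and bounded'' into a statement forcing the injective coresolutions of the relevant syzygies — and ultimately of $I$ itself — to stabilise, contradicting $pd(I)=\infty$ through the interplay of the two Benson criteria.

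More concretely, here is the mechanism I expect to use. From the short exact sequences $0 \to \Omega^{i+1}(I) \to P_i \to \Omega^i(I) \to 0$ and Lemma \ref{chenlemma} one relates $Gid$ across syzygies; but $Gid$ does not behave as simply as $Gpd$, so instead I would dualise: apply $D$ to get that $D(\Omega^i(I))$ has $Gpd$ finite (our assumption) precisely when $\Omega^i(I)$ has $Gid$ finite, and use the three inequalities of Lemma \ref{chenlemma} on the dual sequences $0 \to D(\Omega^i(I)) \to D(P_i) \to D(\Omega^{i+1}(I)) \to 0$, where $D(P_i)$ is injective so has finite $Gpd$ iff it has finite projective dimension. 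This couples $pd(D(P_i)) < \infty$ (equivalently the injective $P_i$ has finite projective dimension — but $I$ has a \emph{minimal} projective resolution, so the $P_i$ are generally not injective, so I cannot say this!). This is the snag; I would instead keep $Gpd$ of the injective summands in play via Lemma \ref{lemmagorprodiminj}(2) in its dual form, getting that every indecomposable injective summand of $D(P_i)$ — wait, $D(P_i)$ is injective as a left module — with infinite projective dimension has infinite $Gpd$, hence by our contradiction hypothesis such a summand cannot occur unless it has \emph{finite} projective dimension. Iterating this bound down a minimal injective coresolution of each $\Omega^i(I)$ and feeding it back through Lemma \ref{chenlemma2} and Lemma \ref{bensonlemma} should force only finitely many simples to appear across \emph{all} the $\Omega^i(I)$ together, which bounds $pd(I)$ and gives the contradiction.

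**The main obstacle.** The delicate point is precisely the asymmetry in Lemma \ref{chenlemma}: Gorenstein projective dimension is well behaved on the \emph{syzygy} side (Lemma \ref{gorprodimdimlemma2}) but not on the cosyzygy side, and conversely for $Gid$. Getting a single indecomposable module to witness both infinities means I must either (a) find a syzygy $\Omega^i(I)$ with an indecomposable summand that \emph{also} has infinite $Gid$, or (b) derive a contradiction from the assumption that no such summand exists. The hard part is organising the finitely-many-simples counting argument so that the bound obtained on the injective coresolutions of all syzygies of $I$ really does translate into a finite bound on $pd(I)$ — this requires carefully tracking, via the two halves of Lemma \ref{bensonlemma}, which simples can appear as tops of projectives in the resolution of $I$ versus which can appear as socles of injectives in the coresolutions of its syzygies, and closing the loop using Proposition \ref{syzygyinjectives} to rule out the Gorenstein-projective escape route at every stage.
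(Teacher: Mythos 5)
Your set-up matches the paper's: reduce via the duality $D$ and the opposite algebra to the case of infinite projective dimension on the injective side, get infinite Gorenstein projective dimension from Lemma \ref{lemmagorprodiminj}, keep it along syzygies via Lemma \ref{gorprodimdimlemma2}, and argue by contradiction against the negation of the conclusion. But the mechanism you then propose has a genuine gap. You hope to turn ``every indecomposable summand of every $\Omega^i(I)$ with infinite $Gpd$ has finite $Gid$'' into a bound on $pd(I)$ by a finitely-many-simples count; this cannot work as stated. An Artin algebra has only finitely many simple modules up to isomorphism, so ``only finitely many simples appear across all the $\Omega^i(I)$'' is automatic and contradicts nothing; what $pd(I)=\infty$ actually gives (by pigeonhole and Lemma \ref{bensonlemma}) is a single simple $T$ with $Ext^l(I,T)\neq 0$ for infinitely many $l$, not infinitely many distinct simples. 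Moreover your hypothesis yields finite $Gid$ only for those summands with infinite $Gpd$, these finite values need not be uniformly bounded, and $Gid(I)=0$ trivially since $I$ is injective, so no tension ever arises at $I$ itself; Lemma \ref{chenlemma2} does not ``stabilise'' any coresolution for you. You notice the snag yourself mid-argument, but the replacement step remains a statement of intent rather than a proof.

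The missing idea is the gluing step the paper borrows from \cite{SSZ}. Lemma \ref{chenlemma2} is used in the opposite direction from yours: the simples $T$ with $Ext^l(D(A),T)\neq 0$ for infinitely many $l$ (the set $W$) have \emph{infinite} Gorenstein injective dimension. Choosing $r$ beyond the finite bound $w$ governing the simples outside $W$, one gets $Ext^1(M,T)\neq 0$ for some indecomposable summand $M$ of $\Omega^{r-1}(D(A))$ with $Gpd(M)=\infty$ and some $T\in W$. This produces a non-split extension $0\to T\to E\to M\to 0$ with $Gid(T)=\infty$ and $Gpd(M)=\infty$; taking such an extension with $M$ of minimal length and using a pullback along a split monomorphism, one shows $E$ can be taken indecomposable, and then Lemma \ref{chenlemma} forces $Gpd(E)=\infty$ (else $M$ would have finite $Gpd$) and $Gid(E)=\infty$ (else $T$ would have finite $Gid$), contradicting the hypothesis. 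Without some device of this kind that welds a module of infinite $Gpd$ and a module of infinite $Gid$ into one indecomposable middle term, the resolution bookkeeping you outline does not produce the contradiction.
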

\begin{proof}
In this proof we will often use that for a module $R$, $Ext^i(R,T) \neq 0$ iff the projective cover of $T$ is a direct summand of $P_i$ when $(P_i)$ is a minimal projective resolution of $R$ and $T$ is a simple module, see \ref{bensonlemma}.
That $A$ is not Gorenstein means that we have that the projective dimension of $D(A)$ is infinite or that the injective dimension of $A$ is infinite.
We look at both cases. \newline
\underline{Case 1:} Assume first that the projective dimension of $D(A)$ is infinite and assume to the contrary that each indecomposable $A$-module has finite Gorenstein projective dimension or finite Gorenstein injective dimension.
By \ref{lemmagorprodiminj} $D(A)$ also has infinite Gorenstein projective dimension.
With $D(A)$ also the modules $\Omega^p(D(A))$ have infinite Gorenstein projective dimension by \ref{gorprodimdimlemma2} for $p \geq 0$. 
Define $W:= \{ T | T$ simple and $Ext^l(D(A),T) \neq 0$ for infinitely many $l \geq 1 \}$ and $w:= \sup \{ t \geq 1 | Ext^t(D(A),L) \neq 0$ for a simple module $L$ not in $W \}$ in case not every simple module is in $W$ and $w:=1$ else. Note that $W$ is non-empty since $D(A)$ has infinite projective dimension and by the definition of $W$, $w$ is a finite natural number.
By \ref{chenlemma2}, all simple modules in $W$ have infinite Gorenstein injective dimension.
Now let $r > w+1$ and note that $Ext^r(D(A), \bigoplus\limits_{T \in W}{T}) \neq 0$, since any simple module $L$ with $Ext^r(D(A),L) \neq 0$ is in $W$ because we assume $r > w+1$.
Now $Ext^r(D(A), \bigoplus\limits_{T \in W}{T}) \cong Ext^1(\Omega^{r-1}(D(A)),\bigoplus\limits_{T \in W}{T})$. Choose an indecomposable direct summand $M$ of $\Omega^{r-1}(D(A))$ with infinite Gorenstein projective dimension. Then also $Ext^1(M,\bigoplus\limits_{T \in W}{T}) \neq 0$, since $M$ has also infinite projective dimension and thus $Ext^1(M,\bigoplus\limits_{T \text{is simple}}{T}) \neq 0$ but also $Ext^1(M,\bigoplus\limits_{T \notin W}{T})= 0$ because we assume $r > w+1$ and $M$ is a direct summand of $\Omega^{r-1}(D(A))$.
Now choose a simple module $T \in W$ with $Ext^1(M,T) \neq 0$.
By construction, $M$ is indecomposable and has infinite Gorenstein projective dimension, $T$ is simple and has infinite Gorenstein injective dimension and there exists a non-split short exact sequence:
$$0 \rightarrow T \rightarrow E \rightarrow M \rightarrow 0.$$
Now consider the family of all non-split short exact sequences of the form 
$$0 \rightarrow T \rightarrow E \rightarrow M \rightarrow 0,$$
with $T$ being simple of infinite Gorenstein injective dimension and $M$ being indecomposable of infinite Gorenstein projective dimension and consider one such non-split short exact sequence in this family such that $M$ is of smallest possible length.
We now show that $E$ is indecomposable when the non-split short exact sequence is choosen such that $M$ is of smallest possible length. The proof follows closely the argument in the proof of the theorem in \cite{SSZ}.
Assume $E$ is decomposable and recall that we assume that every indecomposable module has finite Gorenstein projective dimension or finite Gorenstein injective dimension. Then there is an indecomposable direct summand $\Lambda$ of $E$ with infinite Gorenstein projective dimension (or else $E$ would have finite Gorenstein projective dimension, which would imply that also $M$ has finite Gorenstein projective dimension by \ref{chenlemma}). Since the short exact sequence does not split and $M$ is indecomposable, all coordinate maps from $T$ to direct summands of $E$ are non-zero and thus monomorphisms since $T$ is simple.
Thus there is a non-split short exact sequence of the form 
$$0 \rightarrow T \rightarrow \Lambda \rightarrow N \rightarrow 0,$$
with $\Lambda$ indecomposable with infinite Gorenstein projective dimension and such that $N$ has infinite Gorenstein projective dimension (or else the Gorenstein projective dimension of $\Lambda$ would be finite, since by assumption the Gorenstein projective dimension of $T$ is finite).
Then there exists an indecomposable direct summand $N'$ of $N$ with $Gpd(N')= \infty$. Now we take the pullback along a split monomorphism from $N'$ to $N$ of the short exact sequence 
$$0 \rightarrow T \rightarrow \Lambda \rightarrow N \rightarrow 0,$$
and obtain a non-split (since $\Lambda$ is indecomposable) short exact sequence of the form 
$$0 \rightarrow T \rightarrow F \rightarrow N' \rightarrow 0.$$
This contradicts our choice of a non-split short exact sequence with smallest possible length, since the length of $N'$ is strictly smaller than the length of $M$. Thus $E$ has to be indecomposable. $E$ has infinite Gorenstein projective dimension (or else $M$ would have finite Gorenstein projective dimension) and similarly $E$ has infinite Gorenstein injective dimension (or else $T$ would have finite Gorenstein injective dimension). This is a contradiction to our assumption that every indecomposable module has finite Gorenstein projective dimension or finite Gorenstein injective dimension. \newline

\underline{Case 2:} Assume now that the injective dimension of $A$ is infinite. By \ref{lemmagorprodiminj}, we then have $Gid(A)=\infty$. Let $B:=A^{op}$ be the opposite algebra of $A$.
$Gid(A)=\infty$ is equivalent to $Gpd(D(B))=\infty$. By case 1 of the proof, there exists an indecomposable $B$-module $G$ with infinite Gorenstein projective dimension and infinite Gorenstein injective dimension. Thus the indecomposable $A$-module $D(G)$ also has infinite Gorenstein projective dimension and infinite Gorenstein injective dimension.

\end{proof}

We can now give a quick proof of \ref{SSZtheorem}:
\begin{theorem}
Let $A$ be an Artin algebra of infinite global dimension. Then there exists a module $M$ of infinite projective dimension and infinite injective dimension.
\end{theorem}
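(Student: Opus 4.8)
The plan is to derive the statement from Theorem~\ref{maintheorem} by splitting into two cases according to whether or not $A$ is Gorenstein, so that in both cases the required module is produced by results already established.

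If $A$ is not Gorenstein, I would simply invoke Theorem~\ref{maintheorem}: it supplies an indecomposable module $M$ with $Gpd(M)=\infty$ and $Gid(M)=\infty$. Since, as recalled in the preliminaries, a module of finite projective dimension has Gorenstein projective dimension equal to its projective dimension, and dually for injective dimension, the equality $Gpd(M)=\infty$ forces $pd(M)=\infty$ and $Gid(M)=\infty$ forces $id(M)=\infty$. Hence this $M$ already has the desired properties.

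If $A$ is Gorenstein, Theorem~\ref{maintheorem} is not available, but the situation is rigid enough to argue directly. First, since the global dimension of $A$ is the supremum of the projective dimensions of its finitely many simple modules, there is a simple module $S$ with $pd(S)=\infty$. Because $A$ is Gorenstein, $Gpd(S)=k$ is finite (by \cite{Che}, Corollary~3.2.6, as recalled in the preliminaries), and then, applying the definition of the Gorenstein projective dimension to a projective resolution of $S$, the $k$-th syzygy $\Omega^{k}(S)$ is a Gorenstein projective module. It cannot be projective, for otherwise $pd(S)\le k$. A Krull--Schmidt decomposition of $\Omega^{k}(S)$ therefore contains an indecomposable non-projective summand $G$, and $G$ is again Gorenstein projective since direct summands of Gorenstein projective modules are Gorenstein projective. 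Lemma~\ref{Gorprodimlemma} then gives at once that $G$ has infinite projective dimension and infinite injective dimension.

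I do not expect a genuine obstacle: the real work is carried by Theorem~\ref{maintheorem} and Lemma~\ref{Gorprodimlemma}. The only points that need a little care lie in the Gorenstein case, namely ensuring that some simple module really has infinite projective dimension (which holds because global dimension equals the maximum of the projective dimensions over the finitely many simples) and that the chosen syzygy $\Omega^{k}(S)$ admits a non-projective indecomposable summand (immediate, since a finite direct sum of projective modules is projective).
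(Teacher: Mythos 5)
Your proposal is correct, and it follows the paper's overall structure: the same dichotomy on whether $A$ is Gorenstein, with the non-Gorenstein case handled exactly as in the paper by invoking Theorem \ref{maintheorem} and the fact that finite projective (resp.\ injective) dimension would force the Gorenstein projective (resp.\ injective) dimension to be finite. The difference lies in the Gorenstein case: the paper simply cites Chen (theorem 2.3.7 of \cite{Che}) for the existence of an indecomposable non-projective Gorenstein projective module over a Gorenstein algebra of infinite global dimension, whereas you construct such a module by hand --- pick a simple $S$ with $pd(S)=\infty$, use the finiteness of $Gpd(S)=k$ over a Gorenstein algebra (corollary 3.2.6 of \cite{Che}, already recalled in the introduction) together with the paper's definition of $Gpd$ to see that $\Omega^{k}(S)$ is Gorenstein projective and non-projective, and then take an indecomposable non-projective summand. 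Both routes end with Lemma \ref{Gorprodimlemma}. Your version is more self-contained, effectively reproving the special case of Chen's theorem that is actually needed, at the cost of one extra standard fact that the paper never states explicitly, namely that the class of Gorenstein projective modules is closed under direct summands; this is true (it is proved in \cite{Che}), but you should cite or justify it, since without it the passage from $\Omega^{k}(S)$ to its indecomposable summand $G$ being Gorenstein projective is not covered by anything quoted in the paper. The paper's version is shorter but leans on a black-box citation; yours trades that for a short direct argument.
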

\begin{proof}
Assume $A$ has infinite global dimension. \newline
There are two cases: \newline
\underline{Case 1:} $A$ is Gorenstein. Now a Gorenstein algebra with infinite global dimension has an indecomposable non-projective Gorenstein projective module $G$ (see for example theorem 2.3.7. of \cite{Che}). This module $G$ has infinite projective dimension and infininite injective dimension by \ref{Gorprodimlemma}. \newline
\underline{Case 2:} $A$ is not Gorenstein. In this case \ref{maintheorem} implies that there is an indecomposable module $G$ with infinite Gorenstein projective dimension and infinite Gorenstein injective dimension. But then $G$ clearly also has infinite projective dimension and infinite injective dimension.
\end{proof}

\end{document}